\providecommand{\keywords}[1]
{
  \small	
  \textbf{\textit{Keywords---}} #1
}
\newtheorem{theorem}{Theorem}
\newtheorem{lemma}{Lemma}
\newtheorem{corollary}{Corollary}
\newcommand{\B}{\mathcal{B}}
\title{JORDAN PRODUCT AND ANALYTIC CORE PRESERVERS}
\author{S. Elouazzani \\ \textit{Departement of Mathematics, Labo LIABM,} \\ \textit{Faculty of Sciences, 60000 Oujda, Morocco} \\ \textit{ elouazzani.soufiane@ump.ac.ma}
\and 
M. Elhodaibi \\ \textit{Departement of Mathematics, Labo LIABM,}\\ \textit{Faculty of Sciences, 60000 Oujda, Morocco} \\ m.elhodaibi@ump.ac.ma}
\date{ }
\begin{document}
\maketitle

\textbf{Abstract} Let $\mathcal{B} (X)$ be the algebra of all bounded linear operators on an infinite-dimensional complex Banach space $X$. For an operator $ T \in \mathcal{B} (X)$, $K(T)$ denotes as usual the analytic core of $T$. We determine the form of surjective maps $ \phi $ on $ \mathcal{B} (X)$ satisfying $$ K(\phi(T) \phi(S) + \phi (S) \phi (T)) = K(TS + ST) $$ for all $T, S \in \mathcal{B} (X)$.\\
\textit{2020 Mathematics Subject Classification.} 47B49, 47A10, 47A11\\
\keywords{Analytic core; Inner local spectral radius; Jordan product; Preserver.}

\section{Introduction}
Throughout this paper, $\mathcal{B} (X)$ denotes the algebra of all bounded linear operators on an infinite-dimensional complex Banach space $X$. Recall that the local resolvent set $\rho_{T}(x)$ of an operator $ T\in \mathcal{B} (X)$ at a point $x \in X$ is the set of all $\lambda$ in $\mathbb{C}$ for which there exist an open neighborhood $U_\lambda$ of $\lambda$ in $\mathbb{C}$ and an X-valued analytic function $ f : U_\lambda \rightarrow X$ such that $(T-\mu I)f(\mu) = x$ for all $\mu \in U_\lambda$. As it is known, $\sigma_T (x)$ denotes the local spectrum of $T$ at $x$, it is the complement in $\mathbb{C}$ of $\rho_{T}(x)$ and is a compact subset of the spectrum $\sigma (T)$ of $T$, possibly empty. An operator $ T \in \mathcal{B} (X) $ is said to have $the$ $single-valued$ $extension$ $property$ (SVEP) if for every open subset $U$ of $\mathbb{C}$, the equation $(T-\lambda I)f(\lambda) = 0$ for all $\lambda \in U$, has no nontrivial analytic solution f on $U$. If T has the SVEP, then $\sigma_{T}(x)\neq \emptyset $ for all nonzero vector $ x \in X$.  
Let $r$ be a positive scalar, we denote by $D (0,r)$ and $ \overline{D}(0,r)$, respectively, the open and the closed disc centered at the origin with radius $r$. For every closed subset $F$ of $\mathbb{C}$, the glocal spectral of an operator $T \in \mathcal{B} (X)$ is defined by $\mathcal{X}_{T}(F):  = \lbrace x \in X :\mbox{there exists an analytic function } f : \mathbb{C}\setminus F \longrightarrow X \mbox{ such that } (T - \lambda)f(\lambda) = x \mbox{ for all } \lambda \in \mathbb{C}\setminus F \}$. The local spectral radius of $T$ at $x \in X$ is defined by $$ r_{T}(x)= \limsup_{n\rightarrow \infty} \| T^{n} (x) \| ^{\frac{1}{n}},$$ is coincides with $ r_{T}(x) = \inf \lbrace r \geq 0: x \in \mathcal{X}_{T}(\overline{D}(0,r)) \rbrace$, and also with the maximum modulus of $\sigma_{T}(x)$ if $T$ has the SVEP, see for instance \cite{ref21}. Recall that for an operator $T \in \mathcal{B} (X)$, the analytical core $K(T)$ of $T$ is the set of all $ x\in X$ for which there exist $\delta > 0 $ and a sequence $(x_{n}) \subset X $ such that $x_{0}=x, Tx_{n+1}=x_{n}$ and $\|x_{n}\| \leq \delta^{n} \|x\|$ for all $n\geq 0$, for more information, see \cite{ref3,ref21}. Recall also that the inner local spectral radius $i_{T}(x)$ of $T$ at $x \in X$ is defined by $i_{T}(x):  = \sup \lbrace r \geq 0 : x \in \mathcal{X} _{T}(\mathbb{C}\backslash D(0,r))\rbrace ;$ see \cite{ref23}.\\
The study of local spectra preserver problems was initiated by A. Bourhim and T. Ransford \cite{ref13}, they characterized additive maps on $ \mathcal{B} (X)$ which preserve the local spectrum of all operators $ T \in \mathcal{B} (X)$ at each vector $ x\in X$. After that, the maps preserving local spectrum and local spectral radius have been studied by many authors; see \cite{ref5,ref8,ref10,ref14,ref16}. In \cite{ref20}, M.E. El Kettani and H. Benbouziane showed that a surjective additive map $ \phi $ on $ \mathcal{B} (X) $ satisfies 
$$
 i_{T}(x) = 0 \quad\mbox{ if and only if  } \;\;\; i_{\phi (T)}(x)=0
$$
for all $x\in X$ and $ T \in \mathcal{B} (X)$ if and only if there exists a nonzero scalar $c\in \mathbb{C}$ such that $\phi (T) = c T $ for all $ T \in \mathcal{B} (X)$. M. Elhodaibi and A. Jaatit, in \cite{ref19}, proved the result of T. Jari \cite{ref22} for all maps $\phi $ on $ \mathcal{B} (X)$ (not necessarily surjective) satisfying $$ i_{\phi(T)-\phi(S)}(x) = 0 \Longleftrightarrow i_{T-S}(x) = 0 $$ for every $x \in X$ and $T$, $S \in \mathcal{B} (X)$ if and only if there exists a nonzero scalar $c \in \mathbb{C}$ such that $\phi (T) = cT + \phi (0)$. For the inner local spectral radius preservers of generalized product, A. Achchi \cite{ref2}, showed that if a surjective map $\phi$ on $\mathcal{B} (X)$ satisfies $$ i_{\phi(T_{1})*...*\phi(T_{k})}(x) = 0 \Longleftrightarrow i_{T_{k}*...*T_{k}}(x) = 0 $$
for all $ x \in X $ and all $T_{1},...,T_{k} \in \mathcal{B} (X)$, then there exists a map $\gamma: \mathcal{B} (X)\longrightarrow \mathbb{C} \backslash \lbrace 0 \rbrace$ such that $ \phi (T) = \gamma (T) T $. Let us mention other papers that characterize nonlinear maps preserving certain spectral and local spectral quantities of product and Jordan product of operators or matrices, see for instance \cite{ref6,ref7,ref9,ref12}. 
In the present paper, we characterize surjective maps $ \phi$ (not necessarily linear) on $ \mathcal{B} (X)$ that satisfying
$$K (TS + ST) = K(\phi (T)\phi (S) + \phi (S)\phi (T))$$
for all $T, S \in \mathcal{B} (X)$. This gives immediately the characterization of surjective maps $ \phi : \mathcal{B} (X) \longrightarrow \mathcal{B} (X)$ satisfying   
  $$i_{TS + ST}(x) = 0 \quad\mbox{ if and only if  } \;\;\; i_{\phi (T)\phi (S) + \phi (S)\phi (T)}(x)=0 $$ 
for all $x \in X$ and $T, S \in \mathcal{B} (X)$. 

\section{Preliminaries and Notations}
For every vector $ x \in X $ and every linear functional $ f \in X^{*}$, where $X^{*}$ is the topological dual space of $X$. Denote by $ x \otimes f $ the operator of rank at most one, with $ (x \otimes f)(z) = f(z)x $ for all $ z \in X$. Recall that $ x \otimes f$ is nilpotent if and only if $f(x)= 0$, and it is idempotent if and only if $f(x)= 1$. Let $ \mathcal{F}_1 (X)$ denotes the set of all rank at most one operators on X. For a subspace A of X, the subspace spanned by $A$ is denoted by $span(A)$. Observe that, if $f(x)=0$ then $K(x \otimes f)= \lbrace 0 \rbrace$, if not then $ K(x \otimes f)$ $=span\lbrace x \rbrace$. Let $\dim(Y)$ be the dimension of a subspace $Y$ of $X$, and let for every operator $T \in \mathcal{B}(X)$, $ N(T)$ be the kernel of $T$ and $ R(T)$ be its range. \\\\
In The next lemma, we give some basic properties of $K(T)$ and $i_T (x)$, see for instance \cite{ref3,ref21,ref23}.
\begin{lemma}\label{lem1}
Let $ T \in \mathcal{B} (X) $, then the following statements hold.
\begin{enumerate}
\item[(i)] $ K(T) \subset R (T) $.
\item[(ii)] $ K (\lambda T) = K (T)$ for all nonzero scalar $ \lambda \in \mathbb{C}$.
\item[(iii)] If $M$ is a closed subspace of $X$ and $TM = M$ then $ M \subset K (T)$.
\item[(iv)] If $T$ is quasi-nilpotent then $K(T)= \lbrace 0 \rbrace $.
\item [(v)] $N(T - \lambda) \subset K(T) $ for all nonzero scalar $ \lambda \in \mathbb{C}$. 
\item [(vi)] $ K(T) = \lbrace x\in X: 0 \not\in \sigma_T (x) \rbrace$ for all $T \in \mathcal{B} (X)$.
\item [(vii)]$ i_{T}(x) = 0\quad\mbox{if and only if } \; \; 0 \in \sigma_{T}(x).$
\end{enumerate}
\end{lemma}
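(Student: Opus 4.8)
The seven assertions are all standard, and the plan is to read the routine ones straight off the definition of $K(T)$ and to reserve the real work for part (vi). First I would dispatch (i), (ii) and (v). For (i), if $x\in K(T)$ with associated sequence $(x_n)$, then $x=x_0=Tx_1\in R(T)$. For (ii), given $(x_n)$ and $\delta$ witnessing $x\in K(T)$, the rescaled sequence $y_n:=\lambda^{-n}x_n$ satisfies $y_0=x$, $(\lambda T)y_{n+1}=y_n$ and $\|y_n\|\le(\delta/|\lambda|)^n\|x\|$, so $x\in K(\lambda T)$; the reverse inclusion is obtained the same way. For (v), if $Tx=\lambda x$ with $\lambda\neq0$, the sequence $x_n:=\lambda^{-n}x$ has $x_0=x$, $Tx_{n+1}=x_n$ and $\|x_n\|=|\lambda|^{-n}\|x\|$, hence $x\in K(T)$.

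Next I would handle (iii) and (iv). For (iii): since $M$ is closed it is a Banach space and $T|_M\colon M\to M$ is a bounded surjection, so by the open mapping theorem there is $c>0$ such that each $y\in M$ admits $z\in M$ with $Tz=y$ and $\|z\|\le c\|y\|$; iterating from an arbitrary $x\in M$ produces $(x_n)\subset M$ with $x_0=x$, $Tx_{n+1}=x_n$ and $\|x_n\|\le c^n\|x\|$, so $x\in K(T)$ and $M\subset K(T)$. For (iv): if $x\in K(T)$ is nonzero, then $x=T^nx_n$ for every $n$, so $\|x\|\le\|T^n\|\,\delta^n\|x\|$, which forces $\|T^n\|^{1/n}\ge\delta^{-1}$ for all $n$, contradicting $\lim_n\|T^n\|^{1/n}=0$; hence $K(T)=\{0\}$.

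Part (vi) is the crux, and the one place I expect to need care. For the inclusion $\supseteq$, assume $0\notin\sigma_T(x)$, i.e. $0\in\rho_T(x)$, so there is an $X$-valued analytic $f$ on a disc $D(0,\rho)$ with $(T-\mu)f(\mu)=x$ there; expanding $f(\mu)=\sum_{n\ge0}a_n\mu^n$ and comparing coefficients yields $Ta_0=x$ and $Ta_n=a_{n-1}$ for $n\ge1$, so $x_0:=x$, $x_{n+1}:=a_n$ satisfies $x_0=x$ and $Tx_{n+1}=x_n$. The Cauchy estimates bound $\|a_n\|$ by a constant times $(\rho')^{-n}$ for any $\rho'<\rho$, and translating this into the required form $\|x_n\|\le\delta^n\|x\|$ amounts to enlarging $\delta$ to absorb that constant (using $x\neq0$; the case $x=0$ is trivial), which is the only genuine bookkeeping step. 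For the inclusion $\subseteq$, given $(x_n)$ and $\delta$ witnessing $x\in K(T)$, the series $f(\mu):=\sum_{n\ge0}\mu^nx_{n+1}$ converges for $|\mu|<\delta^{-1}$ and a telescoping computation gives $(T-\mu)f(\mu)=x_0=x$ on that disc, so $0\in\rho_T(x)$.

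Finally, (vii) follows from (vi) and the definition of $\mathcal{X}_T$. If $i_T(x)>0$, then $x\in\mathcal{X}_T(\C\setminus D(0,r))$ for some $r>0$, which by definition supplies an analytic $X$-valued function on the disc $D(0,r)$---a neighbourhood of $0$---satisfying $(T-\lambda)f(\lambda)=x$, so $0\in\rho_T(x)$. Conversely, if $0\notin\sigma_T(x)$, then an analytic solution of the local resolvent equation exists on some disc $D(0,r)$ with $r>0$, whence $x\in\mathcal{X}_T(\C\setminus D(0,r))$ and $i_T(x)\ge r>0$. Thus $i_T(x)=0$ exactly when $0\in\sigma_T(x)$, and the main obstacle throughout remains the power-series bookkeeping in (vi); everything else is a direct manipulation of the defining sequences.
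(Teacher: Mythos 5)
Your proof is correct; the paper itself gives no argument for this lemma, simply referring the reader to \cite{ref3,ref21,ref23}, and the arguments you supply (the defining-sequence manipulations for (i)--(v), the power-series/Cauchy-estimate equivalence for (vi), and the unwinding of the definitions of $\mathcal{X}_T$ and $i_T$ for (vii)) are exactly the standard ones found in those sources. The only point needing the care you already flag is absorbing the Cauchy-estimate constant into $\delta$ in (vi), which you handle correctly by treating $x=0$ separately and enlarging $\delta$.
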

The next lemma and its proof are quoted in \cite{ref17}.
\begin{lemma}\label{lemme2} 
Let $T,S \in \mathcal{B} (X)$. Assume that for every $x\in X$ the vector $Tx$ belongs to the linear span of $x$ and $Sx$. Then $T$ = $\lambda I + \mu S$ for some $ \lambda, \mu \in \mathbb{C}$.
 \end{lemma}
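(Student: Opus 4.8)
The plan is, after disposing of the case where $S$ is a scalar operator (there the hypothesis reads $Tx \in \span\{x\}$ for all $x$, and one concludes $T = \lambda I$ by the standard argument: write $Tx = c_x x$ and compare $T(x+y) = Tx + Ty$ on a linearly independent pair), to exhibit scalars $\lambda_0,\mu_0$ with $T = \lambda_0 I + \mu_0 S$. So assume $S$ is non-scalar, so that $\Omega := \{x \in X : x \text{ and } Sx \text{ are linearly independent}\}$ is nonempty; fix $e \in \Omega$, write $Te = \lambda_0 e + \mu_0 Se$, and put $B := T - \lambda_0 I - \mu_0 S \in \B(X)$, so that $Be = 0$. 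It suffices to prove $B = 0$.

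For every $x \in X$ one has $Bx = Tx - \lambda_0 x - \mu_0 Sx \in \span\{x, Sx\}$, and, since $Be = 0$, also $Bx = B(x+e) \in \span\{x+e,\ Sx+Se\}$. Subtracting the two expressions, if $\{x, Sx, e, Se\}$ is linearly independent then $Bx = 0$. Let $G$ be the set of such $x$; it is open (the set where $\{x, Sx, e, Se\}$ is linearly dependent is closed, by a routine argument using compactness of the unit sphere of $\C^4$). I would then show that $G$ is dense \emph{unless} $S - cI$ has finite rank for some $c$: if $G$ is not dense, its complement has nonempty interior, and since for fixed functionals on $X$ the relevant $4 \times 4$ determinants are polynomials in a scalar parameter, ``$\{x, Sx, e, Se\}$ dependent on an open set'' propagates to ``for all $x \in X$''; with $W := \span\{e, Se\}$ this forces $Sx \in \span\{x\} + W$ for every $x$, whence (the scalar being shown independent of $x$ by the same sort of comparison) $S - cI$ has range inside $W$ for some $c$, i.e. rank at most $2$. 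When $G$ is dense, the bounded operator $B$ vanishes on a dense set, so $B = 0$ and $T = \lambda_0 I + \mu_0 S$.

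It remains to treat $S = cI + N$ with $N := S - cI$ of rank $\le 2$ and $N \ne 0$. Since $\span\{x, Sx\} = \span\{x, Nx\}$ for all $x$, one may replace $S$ by $N$ and assume $S$ has rank $1$ or $2$, and then the statement follows by a direct computation on the (finite-dimensional) range and kernel of $S$. This is where the infinite-dimensionality of $X$ is genuinely needed: for $S = u \otimes g$, say, one reaches the point of needing a functional that vanishes on $\ker g \setminus \span\{u\}$ to be a scalar multiple of $g$, which holds because $\ker g$ has dimension at least two; indeed the lemma fails for $\dim X = 2$, as $S = \bigl(\begin{smallmatrix}0&1\\0&0\end{smallmatrix}\bigr)$, $T = \bigl(\begin{smallmatrix}0&0\\0&1\end{smallmatrix}\bigr)$ shows. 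I expect the main obstacle to be exactly this dichotomy — establishing density of $G$ in the generic case and clearing the finite-rank residual case by hand — the rest being bookkeeping with linear independence.
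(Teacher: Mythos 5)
Your argument is sound, but note that the paper does not actually prove this lemma: its ``proof'' is the single line ``See Lemma 2.4 in \cite{ref17}'', so any self-contained argument is by definition a different route, and yours is a legitimate one. The key device --- choosing $e$ with $e,Se$ independent, setting $B=T-\lambda_0 I-\mu_0 S$ so that $Be=0$, and observing that $Bx\in \span\{x,Sx\}\cap\span\{x+e,Sx+Se\}=\{0\}$ whenever $\{x,Sx,e,Se\}$ is independent --- is correct, and the Zariski-type propagation (each determinant $\det(f_i(v_j))$ is a polynomial of degree $\le 2$ in $x$, so vanishing on a nonempty open set forces vanishing everywhere) does yield the clean dichotomy ``$G$ dense, hence $B=0$ by continuity'' versus ``$\{x,Sx,e,Se\}$ dependent for \emph{every} $x$''. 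Two points deserve more care in a final write-up, though neither is fatal. First, in the second horn the dependence relation gives $Sx\in\span\{x\}+W$ only for $x\notin W=\span\{e,Se\}$ (for $x\in W$ the relation is automatic and says nothing about $Sx$); this still suffices, since the scalar $c$ with $(S-cI)x\in W$ is constant on $X\setminus W$ by the usual $x,y,x+y$ comparison modulo $W$, and $X\setminus W$ is dense, so continuity gives $(S-cI)X\subseteq W$. Second, the residual case is genuinely two sub-cases (rank $1$ and rank $2$); each needs the ``$T$ is scalar on $\ker N$'' step (valid because $\ker N$ has dimension $\ge 2$), an intersection-of-planes argument forcing the range vectors of $T-\lambda I$ into $R(N)$, and in the rank-$2$ case a final check that the two resulting scalars agree --- all of which works precisely because $\dim X\ge 3$, as your $2\times 2$ counterexample correctly predicts. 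So: correct strategy, with deferred bookkeeping that does go through; what your route buys over the paper's bare citation is a self-contained and fairly elementary proof, at the cost of the finite-rank case analysis.
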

 \begin{proof} See lemma 2.4 in \cite{ref17}.
 \end{proof}
The following lemma was introduced in \cite{ref2} for generalized product of operators, but in this paper we use another techniques to characterize rank one operators by the dimension of analytical core of Jordan product of operators.
\begin{lemma}\label{lemme3}
For a nonzero operator $ A\in \mathcal{B} (X)$, the following statements are equivalent.
 \begin{enumerate}
\item[(i)] $A$ is a rank one operator.
\item[(ii)] $\dim (K(TA+AT))\leq 2$ for all $T\in \mathcal{B} (X)$.
 \end{enumerate}
 \end{lemma}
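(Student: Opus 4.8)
The plan is to establish (i)$\Leftrightarrow$(ii) by proving the two implications separately: (i)$\Rightarrow$(ii) is a short direct computation, while for (ii)$\Rightarrow$(i) I would argue the contrapositive, constructing for each $A$ that is not of rank one an operator $T$ with $\dim K(TA+AT)\geq 3$. For (i)$\Rightarrow$(ii), write $A=x\otimes f$ with $x\neq 0$ and $f\neq 0$; then $TA+AT=(Tx)\otimes f+x\otimes(T^{*}f)$, so that $R(TA+AT)\subseteq\span\{Tx,x\}$ and hence $\dim R(TA+AT)\le 2$. Since $K(TA+AT)\subseteq R(TA+AT)$ by Lemma \ref{lem1}(i), this gives $\dim K(TA+AT)\le 2$ for every $T$.

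For (ii)$\Rightarrow$(i), assume $A\neq 0$ is not of rank one and split into three mutually exclusive and exhaustive cases. First, if $A$ has finite rank $r$ with $2\le r<\infty$, write $A=\sum_{i=1}^{r}x_{i}\otimes f_{i}$ with $\{x_{i}\}$ and $\{f_{i}\}$ linearly independent; then $N(A)=\bigcap_{i}\ker f_{i}$ has finite codimension, hence is infinite-dimensional, so one can pick $y_{1},\dots,y_{r}$ with $f_{i}(y_{j})=\delta_{ij}$ and $w_{1},\dots,w_{r}\in N(A)$ such that the $3r$ vectors $x_{i},y_{i},w_{i}$ are linearly independent. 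Define $T$ by $Tx_{i}=y_{i}$, $Ty_{i}=w_{i}$, $Tw_{i}=0$, and $T=0$ on a closed complement of their span. A short computation then shows that $V:=\span\{x_{1},\dots,x_{r},y_{1},\dots,y_{r}\}$ is $(TA+AT)$-invariant and that the matrix of $(TA+AT)|_{V}$ in the ordered basis $(x_{1},\dots,x_{r},y_{1},\dots,y_{r})$ is block lower triangular with identity diagonal blocks, hence invertible; therefore $(TA+AT)V=V$, and Lemma \ref{lem1}(iii) gives $V\subseteq K(TA+AT)$, so $\dim K(TA+AT)\ge 2r\ge 4$.

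The other two cases concern $A$ of infinite rank. If $A=cI+F$ with $c\neq 0$ and $F$ of finite rank, then $N(F)$ is infinite-dimensional; taking $T$ to be a projection onto a three-dimensional subspace $W\subseteq N(F)$, one gets $(TA+AT)x=2cTx+T(Fx)+F(Tx)=2cx$ for every $x\in W$, so $W\subseteq N\big((TA+AT)-2c\big)\subseteq K(TA+AT)$ by Lemma \ref{lem1}(v) (note $2c\neq 0$), whence $\dim K(TA+AT)\ge 3$. The remaining case is that $A-cI$ has infinite rank for every $c\in\C$; here I would first prove the linear-algebra fact that one can then choose $u_{1},u_{2},u_{3}\in X$ with $u_{1},u_{2},u_{3},Au_{1},Au_{2},Au_{3}$ linearly independent, building them one at a time so that the impossibility of extending at any stage would force $R(A-\mu I)$ to lie in a fixed finite-dimensional subspace for some $\mu$, contradicting the hypothesis. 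Given such $u_{i}$, set $Tu_{i}=0$, $T(Au_{i})=u_{i}$, and $T=0$ on a closed complement of $\span\{u_{1},u_{2},u_{3},Au_{1},Au_{2},Au_{3}\}$; then $(TA+AT)u_{i}=u_{i}$, so $u_{1},u_{2},u_{3}\in N\big((TA+AT)-I\big)\subseteq K(TA+AT)$ by Lemma \ref{lem1}(v), giving $\dim K(TA+AT)\ge 3$.

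I expect the main obstacle to be the linear-algebra step opening the third case (and, to a lesser extent, checking in the first case that the prescribed vectors can be chosen independent and that the resulting restricted operator is unipotent). Once these points are in place, everything else reduces to routine bookkeeping of the Jordan product $TA+AT$ on the finite-dimensional subspaces we build, together with parts (i), (iii) and (v) of Lemma \ref{lem1}.
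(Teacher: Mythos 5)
Your proof is correct, but it takes a genuinely different route from the paper's. Both arguments prove (ii)$\Rightarrow$(i) by contraposition, constructing from any $A$ of rank $\geq 2$ an operator $T$ with $\dim K(TA+AT)\geq 3$; the difference lies in the case decomposition. The paper splits on $\mathrm{rank}(A)\geq 3$ versus $\mathrm{rank}(A)=2$ and then, fixing $y_i=Ax_i$, on the dimension of $\mathrm{Span}\{x_1,x_2,x_3,y_1,y_2,y_3\}$ (four subcases), in each one taking $T$ to be a partial inverse $Ty_i=x_i$ (annihilating some of the $x_i$) so that a three-dimensional subspace is mapped onto itself; the rank-two case needs an extra perturbation by kernel vectors to arrange the required independence. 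You instead use the decomposition standard in the preserver literature: $A$ of finite rank $\geq 2$, $A=cI+F$ with $c\neq 0$ and $F$ finite rank, and $A-cI$ of infinite rank for every $c$ — these are indeed exhaustive and mutually exclusive. Your computations check out in all three cases: the block-unipotent matrix in the finite-rank case is invertible so Lemma \ref{lem1}(iii) applies (the vectors $w_i$ are actually superfluous — setting $Ty_i=0$ gives the same conclusion); the projection argument in the second case correctly lands in $N(TA+AT-2cI)$ with $2c\neq 0$; and the third case reduces to the known fact that an operator outside $\C I+\mathcal{F}(X)$ admits $u_1,u_2,u_3$ with $u_1,u_2,u_3,Au_1,Au_2,Au_3$ independent, whose proof you correctly identify (a failure to extend forces $Au\in\C u+W$ for a fixed finite-dimensional $W$, and a standard argument then makes the scalar constant, giving $A-\mu I$ finite rank). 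What each approach buys: the paper's is entirely self-contained elementary linear algebra but pays for it with a longer subcase analysis, especially for rank two; yours is shorter and more conceptual but outsources the heavy lifting to the independence lemma in the third case, which you sketch rather than prove in full — that sketch is the one point that would need to be written out completely.
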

\begin{proof}
For $ A \in \mathcal{B} (X)$, we consider the following operator $ R = A T+ TA $ with $ T \in \mathcal{B} (X) $ is an arbitrary operator. It is clear that we have $(i)\Longrightarrow (ii)$. For $(ii)\Longrightarrow (i)$ suppose that $ Rank (A) \geq 2$, and let us show that there exists $T \in \mathcal{B} (X)$ such that $ \dim (K(R)) \geq 3 $. We will distinguish two cases. \\\\
\textbf{Case 1.} If $ Rank (A) \geq 3$.\\ Let $y_{1},y_{2},y_{3} \in X$ be linearly independent vectors such that $y_{1}= Ax_{1}$, $y_{2}= Ax_{2}$ and $y_{3}= Ax_{3}$ where $x_1, x_{2},x_3$ $ \in X$ are obviously linearly independent. We will distinguish four cases.
\begin{itemize}
\item[(1)] If  $span \lbrace x_{1},x_{2},x_{3}\rbrace =  span \lbrace y_{1},y_{2},y_{3}\rbrace. $\\ Let $ T \in \mathcal{B} (X)$ be an operator satisfying  $ Ty_{1}= x_{1}, Ty_{2}= x_{2}$ and $Ty_{3}= x_{3}.$ As $AT=TA=I$ on $span \lbrace x_1,x_2,x_3 \rbrace$, then $Rx_{1}=2x_{1}$, $Rx_{2}=2x_{2}$ and $Rx_{3}=2x_{3}$. Hence $span \lbrace x_{1},x_{2},x_{3} \rbrace \subset N(R-2I) \subset K (R)$, and so $ \dim (K(R)) \geq 3 $.\\
\item[(2)] If  $ \dim (span \lbrace x_{1},x_{2},x_{3},y_{1},y_{2},y_{3}\rbrace) = 4$.\\ Let for example $span\lbrace x_{1},x_{2},x_{3},y_{1},y_{2},y_{3}\rbrace=$ $span \lbrace x_{1},y_{1},y_{2},y_{3}\rbrace$. Consider an operator $ T \in \mathcal{B} (X)$ satisfying $Ty_{1}= x_{1}, Ty_{2}= x_{2}, Ty_{3}= x_{3}$ and $Tx_{1} = 0 .$ For $ x_{2}= \alpha x_{1} + \beta y_{1} + \gamma y_{2} + \delta y_{3} $ and $ x_3 = \alpha^{'} x_{1} + \beta^{'} y_{1} + \gamma^{'} y_{2} + \delta^{'} y_{3} $ where $ \alpha, \beta, \gamma, \delta, \alpha^{'}, \beta^{'}, \gamma^{'}, \delta^{'} \in \mathbb{C}$, we get that $Rx_{2}= 2x_{2}- \alpha x_{1}$ and $ Rx_{3} = 2x_{3}- \alpha' x_{1}$. Since $ Rx_{1}= x_{1}$, then $ RM=M$ with $M =span\lbrace x_{1}, x_{2},x_{3}\rbrace$. Hence $ M \subset K(R)$, therefore $\dim (K(R)) \geq 3$.\\
\item[(3)] If $ \dim (span \lbrace x_{1},x_{2},x_{3},y_{1},y_{2},y_{3}\rbrace )= 5$.\\ Let for example $span \lbrace x_{1},x_{2},x_{3},y_{1},y_{2},y_{3}\rbrace =span\lbrace x_{1},x_{2},y_{1},y_{2},y_{3}\rbrace $. Take an operator  $T \in \mathcal{B} (X) $ satisfying $Ty_{1}= x_{1}, Ty_{2}= x_{2}, Ty_{3}= x_{3}$ and $ Tx_{1}= Tx_{2}=0 .$ For $x_{3}= \alpha x_{1} + \beta x_{2} + y $ where $ y \in$ Span$\lbrace  y_{1},y_{2},y_{3}\rbrace $ and $ \alpha, \beta \in \mathbb{C}$, we get that $Rx_{1}=x_{1}, Rx_{2}=x_{2}$ and $ Rx_{3}= 2 x_{3} - \alpha x_{1} - \beta x_{2}  $. It follows that $ RM=M$ with $M =span\lbrace x_{1}, x_{2},x_{3}\rbrace$. Then $ M \subset K(R)$, thus $\dim (K(R)) \geq 3$. \\
\item[(4)] If $ \dim (span \lbrace x_{1},x_{2},x_{3},y_{1},y_{2},y_{3}\rbrace) = 6.$\\ Let $ T \in \mathcal{B} (X)$ be an operator satisfying $Ty_{1}= x_{1}, Ty_{2}= x_{2}, Ty_{3}= x_{3}$ and $ Tx_{1}= Tx_{2}= Tx_{3}= 0 .$ Since $Rx_{1}=x_{1}$, $Rx_{2}=x_{2}$ and $Rx_{3}=x_{3}$. It follows that $ span\lbrace x_{1},x_{2},x_{3}\rbrace \subset N(R-I) \subset K (R)$, and so $ \dim ( K(R)) \geq 3 $. 
\end{itemize}
\vspace{0.2cm}
\textbf{Case 2.} If $ Rank(A) = 2$.\\ Let $y_{1}$ and $y_2$ two linearly independent vectors such that $y_{1}=Ax_{1}$, $y_{2}=Ax_{2}$ and $Ay_1 = a y_1 +b y_2$ where $a,b \in \mathbb{C}$.\\ If $x_1,y_1 ,y_2   \in X$ are linearly dependent, we can choose $ u \in N(A)$ such that $x_1 +u,y_1 ,y_2\in X$ are linearly dependent. Now, if $x_1+u,x_2,y_1\in X$ are linearly dependent. As $x_1+u$ and $y_1$ are linearly independent, and $N(A)$ is an infinite-dimensional subspace of $X$. Then we can take $ v \in N(A)$ such that $x_1 +u,x_2 +v,y_1 \in X$ are linearly independent. Thus without loss of generality, we may assume that $( x_1, y_1,y_2 )$ and $( x_1, x_2,y_1 )$ are linearly independent. Let us distinguish two cases. 
\begin{itemize}
\item[(1)] If $  x_{2} \in$  $span\lbrace x_{1}, y_{1},y_{2} \rbrace .$\\ Let $  x_{2}= \alpha x_{1} + \beta y_{1} + \gamma y_{2}$ where $\alpha,\beta,\gamma \in \mathbb{C}$. For an operator $T\in \mathcal{B} (X)$ satisfying $Ty_{1}= x_{1}, Ty_{2}= x_{2}$ and $Tx_{1}= 0$, we get that
$$ \begin{cases}
Rx_{1}=x_{1} \\
Rx_{2}= 2x_{2}- \alpha x_{1} \\ 
Ry_{1}=y_{1}+ ax_{1}+ b x_{2}.\\
\end{cases} $$
This yields that $R M = M$ with $ M =$  $span\lbrace x_{1}, x_{2}, y_{1} \rbrace$. Hence $M \subset K(R)$, thus $ \dim (K(R)) \geq 3 $. \\
\item[(2)] If $ ( x_{1}, x_{2}, y_{1}, y_{2} ) $ are linearly independent.\\ Consider an operator $T\in \mathcal{B} (X)$ satisfying $Ty_{1}= x_{1}, Ty_{2}= x_{2}$ and $Tx_{1}=Tx_{2}= 0.$ Hence $$ \begin{cases} Rx_{1}=x_{1} \\
Rx_{2}=x_{2} \\
 Ry_{1}=y_{1}+ a x_{1}+ b x_{2}. \end{cases} $$ It follows that $R M = M$ with $ M =$ $span\lbrace x_{1}, x_{2}, y_{1} \rbrace $. Thus $M \subset K(R)$, and so $ \dim (K(R)) \geq 3 $. This establishes the lemma. 
 \end{itemize}
 \end{proof}
For Jordan product of operators, we give a characterization for two operators to be linearly dependent.
 \begin{lemma}\label{gene}
For $A$ and $B$ two operators in $\mathcal{B} (X) $, the following statements are equivalent. 
\begin{enumerate}
\item[(i)] $ K( A T + T A) = K (BT + TB) $ for every $ T \in \mathcal{B} (X) $.
\item[(ii)] $ K( A F + F A) = K (BF + FB) $ for every $ F \in \mathcal{F}_1 (X) $. 
\item[(iii)]  $ B = \lambda A $ for some nonzero scalar $ \lambda \in \mathbb{C}$.
 \end{enumerate}
\end{lemma}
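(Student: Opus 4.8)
The plan is to prove the cycle $(ii)\Rightarrow(iii)\Rightarrow(i)\Rightarrow(ii)$. The implication $(i)\Rightarrow(ii)$ is immediate since $\mathcal{F}_1(X)\subseteq\mathcal{B}(X)$, and $(iii)\Rightarrow(i)$ follows at once from $BT+TB=\lambda(AT+TA)$ together with Lemma~\ref{lem1}(ii). So the whole content is $(ii)\Rightarrow(iii)$, and throughout this step I would work with rank-one operators $F=x\otimes f$, using that $AF+FA=(Ax)\otimes f+x\otimes(f\circ A)$ has range inside $\span\{x,Ax\}$, and that once we know $B=\lambda I+\mu A$ one has $BF+FB=2\lambda F+\mu(AF+FA)$. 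The degenerate case $A=0$ is disposed of first: then $K(BF+FB)=\{0\}$ for every rank-one $F$, and testing on an idempotent $F=x\otimes f$ with $f(x)=1$ (where $K(BF+FB)=K(2bF)=\span\{x\}$ if $B=bI$, and in general one probes as below) forces $B=0$; so we may assume $A\neq 0$, and symmetrically $B\neq 0$.

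The first substantive step is to show $Bx\in\span\{x,Ax\}$ for every $x\in X$, which gives $B=\lambda I+\mu A$ for some scalars $\lambda,\mu$ by Lemma~\ref{lemme2}. Suppose instead that $Bx\notin\span\{x,Ax\}$ for some $x$; choose $f\in X^*$ with $f(x)=f(Ax)=0$ and $f(Bx)=1$, and set $F=x\otimes f$. A short computation shows $(AF+FA)^3=0$ (its range lies in $\span\{x,Ax\}$ and it is nilpotent there), so $K(AF+FA)=\{0\}$ by Lemma~\ref{lem1}(iv). On the other hand $x,Bx$ are independent, $\span\{x,Bx\}$ is invariant under $BF+FB$, and the $2\times2$ matrix of the restriction is upper triangular with $1$'s on the diagonal, hence invertible; by Lemma~\ref{lem1}(iii) this forces $\span\{x,Bx\}\subseteq K(BF+FB)$, contradicting $(ii)$.

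It remains to pin down $\lambda,\mu$. If $A=aI$ is scalar (so $a\neq 0$), then $B=(\lambda+\mu a)I$ and, since $B\neq 0$, $B=\tfrac{\lambda+\mu a}{a}A$, as wanted. Assume $A$ is not scalar. Then $\mu\neq 0$: otherwise $B=\lambda I$ is scalar, and applying the previous paragraph with the roles of $A$ and $B$ interchanged (the hypothesis $(ii)$ being symmetric) would give $Ax\in\span\{x,Bx\}=\span\{x\}$ for all $x$, i.e. $A$ scalar. Finally $\lambda=0$: suppose not, put $c=2\lambda/\mu\neq 0$, pick $u$ with $u,Au$ linearly independent (possible since $A$ is not scalar), and pick $g\in X^*$ with $g(u)=1$ and $g(Au)^2=g(A^2u)$ — such a $g$ always exists, since if $u,Au,A^2u$ are independent one prescribes $g$ freely on them, and otherwise $A^2u=\alpha u+\beta Au$ and one takes $g(Au)$ to be a root of $s^2-\beta s-\alpha=0$. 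With $F=u\otimes g$ and $M=\span\{u,Au\}$, the ranges of $AF+FA$ and of $BF+FB=\mu(AF+FA+cF)$ both lie in the $2$-dimensional invariant subspace $M$, so their analytic cores equal those of their restrictions to $M$. In the basis $\{u,Au\}$ one checks that $(AF+FA)|_M$ and $(AF+FA+cF)|_M$ have the common determinant $g(Au)^2-g(u)g(A^2u)=0$, hence each has rank $\leq1$, while their traces differ by $cg(u)=c\neq0$; a rank-one $2\times2$ matrix is nilpotent exactly when its trace vanishes (then its core is $\{0\}$) and otherwise its core is its one-dimensional range. Comparing the two cases one gets $K(AF+FA)\neq K(AF+FA+cF)=K(BF+FB)$, contradicting $(ii)$. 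Hence $B=\mu A$ with $\mu\neq0$.

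The step I expect to be the main obstacle is the last one, $\lambda=0$: one must produce a single rank-one $F$ whose two Jordan products have genuinely different analytic cores, even though $BF+FB$ is only a rank-one ($2\lambda F$) perturbation of $\mu(AF+FA)$. The resolution rests on the identity $\det\bigl((AF+FA)|_M\bigr)=\det\bigl((AF+FA+cF)|_M\bigr)$, which lets one force both induced $2\times2$ matrices to be simultaneously singular — so both cores are automatically one-dimensional or trivial — after which the mismatch of traces finishes the argument; the one free quadratic equation available in the choice of $g$ is exactly what guarantees the common determinant can be made to vanish in every configuration of $u,Au,A^2u$.
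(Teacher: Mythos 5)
Your proof is correct and follows essentially the same route as the paper's: probe with rank-one operators $x\otimes f$, invoke Lemma~\ref{lemme2} to get $B=\lambda I+\mu A$, and then kill the scalar part using a functional with $g(u)=1$ and $g(Au)^2=g(u)g(A^2u)$ --- your determinant/trace formulation simply unifies the paper's separate case splits on the independence of $(x,Ax,Bx)$ and of $(x,Ax,A^2x)$. The only detail left implicit is the subcase where both restricted matrices have nonzero trace, in which the two cores are the lines $\mathrm{span}\{g(Au)u+Au\}$ and $\mathrm{span}\{(g(Au)+c)u+Au\}$; these are distinct because $c\neq 0$ and $u,Au$ are linearly independent, which is exactly the check the paper performs at the corresponding point.
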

\begin{proof} For $ A,B \in \mathcal{B} (X)$, we consider the following operators $ R = A F+ FA $ and $ S = B F + F B $ with $ F \in \mathcal{F}_1 (X)$ is an arbitrary operator. The implications (i) $ \Longrightarrow$ (ii) and (iii) $ \Longrightarrow$ (i) are straightforward. We only need to show the implication (ii) $ \Longrightarrow$ (iii). Let us distinguish two cases.\\\\ 
$\hspace*{0.3cm}$\textbf{Case 1.} $(x,Ax,Bx)$ are linearly independent for some nonzero vector $ x \in X $. Let $f$ be a linear functional in $X^*$ such that $f(x)= f (Ax) = 0 $ and $f(Bx)=1$. For $ F = x \otimes f $, we obtain that
$$ \begin{cases} Rx=0\\
RAx = f(A^{2}x)x 
\end{cases}
\quad\mbox{and \;\;\;\;\; }
 \begin{cases}
Sx = x\\
 SBx = f(B^{2}x)x + Bx. \\ 
\end{cases}$$
This yields that $R^{2}= 0 $, thus $K (R) = \lbrace 0 \rbrace $ but $ x \in K (S) $, which is a contradiction.\\
$\hspace*{0.3cm}$\textbf{Case 2.} $(x,Ax,Bx)$ are linearly dependent for all $x \in X$.\\ Let $x$ be a vector in $X$. If $ x$ and $Ax$ are linearly independent, then we get that $ Bx \in span \{ x,Ax \}$. If not, then $Ax= \mu x$ where $ \mu \in \mathbb{C}$. Suppose that $Bx$ and $x$ are linearly independent. It follows that there exists a linear functional $ f \in X^* $ such that $ f(x) = 0$ and $ f(Bx)=1 $. For $T = x\otimes f$ we obtain that $ Rx = 0$ and $ Sx=x$. Thus $x \in K(S) = K(R)= \lbrace 0 \rbrace $. This contradiction shows that $Bx \in span \lbrace x \rbrace \subset  span \{ x,Ax \}$. We conclude that $ Bx \in span \{ x,Ax \}$ for all $x\in X$. Lemma \ref{lemme2} implies that $ B = \lambda A + \alpha I $ where $\alpha , \lambda \in \mathbb{C} $.
It is clear that we have $$ A =0 \mbox{ if and only if } B=0$$ and also $$  A \in \mathbb{C}I  \mbox{ if and only if } B \in \mathbb{C}I.$$  Now, if $A \notin \mathbb{C}I $, assume that $ \alpha \neq 0 $. For that we distinguish two cases. 
\begin{itemize}
\item[(1)]  If $(x, Ax,A^{2}x)$ are linearly independent for some $x\in X$.\\ Take $ F = x \otimes f$ such that $f(x) = 1$ and $ f(Ax)=f(A^{2}x)=0$. Hence $R ^{2} = 0$, thus $ K (R) = \lbrace 0 \rbrace$. On the other hand, we have $K(S)= span \lbrace \lambda A x + 2 \alpha x \rbrace $, this is a contradiction.
\item[(2)] If $(x, Ax,A^{2}x)$ are linearly dependent for all $x\in X$.\\ It is obvious that we have $ A^2x \in span \lbrace x, Ax\rbrace $ for all $x\in X$. It follows by Lemma \ref{lemme2} that $ A^{2}= a A + b I$ where $a, b \in \mathbb{C}$. Without loss of generality, we may assume that $\lambda = 1 $. Since $ A \not\in \mathbb{C}I$. Then there exists a nonzero vector $x \in X$ such that $Ax$ and $x$ are linearly independent. Consider an operator $F=x \otimes f$ such that $ f (x) \neq 0 $ and $ f(Ax)^{2}=f(x)f(A^{2}x)$, then we obtain $$ \begin{cases} Rx=f(Ax)x + f(x)Ax\\
RAx =f (A^{2}x) x + f(Ax) Ax \\    
\end{cases}
\quad\mbox{and \;\;\;\;\; }
 \begin{cases}
Sx = f(Bx)x + f(x)Bx  \\
 SAx = f(BAx)x + f(Ax)Bx. \\ 
\end{cases}$$
Hence $$
 \begin{cases}
 Rx=f(Ax)x + f(x)Ax\\
RAx =\dfrac{f(Ax)}{f(x)} Rx
\end{cases}
\quad\mbox{and \;\;\;\;\; }
 \begin{cases}
Sx = f(Ax)x + f(x) Ax + 2 \alpha f(x)x  \\
 SAx = (f(A^{2}x) + \alpha f(Ax))x + f(Ax)(Ax + \alpha x). \\ 
\end{cases}$$
Thus$$
 \begin{cases}
Sx = Rx + 2 \alpha f(x)x  \\
 SAx = f(A^{2}x) x + f(Ax)Ax + 2 \alpha f(Ax)x, \\ 
\end{cases}$$
and so
  $$
 \begin{cases}
Sx = Rx +  2\alpha f(x)x  \\
 SAx = \dfrac{f(Ax)}{f(x)} Sx. \\ 
\end{cases}$$

\begin{enumerate}
\item[$\bullet$] If $f(Ax)=0$, we have $R^{2} = 0$ and $ K(R)=\lbrace 0 \rbrace $. Since $ Sx = f(x)Ax + 2\alpha f(x) x$ and $SAx=0$, then $S^{2}x= 2 \alpha  f(x) Sx$. This yields that $Sx \in K(S) = K(R)=\lbrace 0 \rbrace$, contradiction.
\item[$\bullet$] Now, if $f(Ax) \neq 0$, we have $K(R) = $ $span \lbrace Rx \rbrace$ and $ K(S) = $ $span \lbrace Sx  \rbrace$. Hence there exists a nonzero scalar $\delta \in \mathbb{C} $ such that $\; Sx = \delta Rx $. This implies that $ 2 \alpha f(x) x = (\delta-1)Rx $. As $x$ and $Ax$ are linearly independent, then $\delta = 1$, contradiction.
\end{enumerate}
\end{itemize}
Finally, we get that $ \alpha = 0 $ and the proof is complete. 
\end{proof}

 \section{Main Result} 
We start this section with the following theorem that gives a characterization of maps preserving the analytic core of the Jordan product of operators.
 \begin{theorem}\label{thm1}
Let $\phi: \mathcal{B} (X) \longrightarrow \mathcal{B} (X)$ be a surjective map. Then the following assertions are equivalent:
\begin{enumerate}
\item[(i)] For every $ T, S \in \mathcal{B} (X)$, we have 
$$K(\phi (T) \phi (S) + \phi (S) \phi (T)) = K(TS + ST) $$

\item[(ii)] There is a map $ \gamma : \mathcal{B} (X) \longrightarrow \mathbb{C} \backslash \lbrace 0 \rbrace$ 
such that $ \phi (T) = \gamma (T) T $ for all $ T \in \B (X)$.
 \end{enumerate}
\end{theorem}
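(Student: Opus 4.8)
\noindent\emph{Proof plan.} The implication (ii) $\Rightarrow$ (i) is immediate: if $\phi(T)=\gamma(T)T$ with $\gamma(T)\in\C\setminus\{0\}$, then $\phi(T)\phi(S)+\phi(S)\phi(T)=\gamma(T)\gamma(S)(TS+ST)$, so Lemma~\ref{lem1}(ii) gives $K(\phi(T)\phi(S)+\phi(S)\phi(T))=K(TS+ST)$. The whole content is therefore (i) $\Rightarrow$ (ii), and the plan is to reduce it, by means of Lemma~\ref{gene}, to the single case where the operators involved have rank one.

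First I would record two preliminary facts about a surjective $\phi$ satisfying (i). Putting $S=0$ in (i) and using surjectivity yields $K(B\phi(0)+\phi(0)B)=K(0)=\{0\}$ for every $B\in\mathcal{B}(X)$; comparing $\phi(0)$ with $0$ in Lemma~\ref{gene} (whose statement allows either operator to vanish) forces $\phi(0)=0$. Next, combining Lemma~\ref{lemme3} with surjectivity shows that $\phi$ preserves rank-one operators in both directions: for $T\neq0$ one has $T$ rank one $\iff$ $\dim K(TS+ST)\leq2$ for all $S$ $\iff$ $\dim K(\phi(T)\phi(S)+\phi(S)\phi(T))\leq2$ for all $S$ $\iff$ $\dim K(\phi(T)B+B\phi(T))\leq2$ for all $B\in\mathcal{B}(X)$ $\iff$ $\phi(T)$ has rank one or $\phi(T)=0$; and a short direct argument (for $T=x_0\otimes f_0\neq0$, pick $x_1\notin\span\{x_0\}$ with $f_0(x_1)=1$ and an $S$ interchanging $x_0$ and $x_1$, so that $TS+ST$ has a nonzero eigenvalue on $\span\{x_0,x_1\}$ and hence, by Lemma~\ref{lem1}(v), $K(TS+ST)\neq\{0\}$) shows that $\phi(T)=0$ only for $T=0$. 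Thus the image under $\phi$ of the rank-one operators is exactly the set of rank-one operators.

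Next comes the reduction. It suffices to prove that $\phi(F)=\gamma(F)F$ for every rank-one $F$: granting this, fix an arbitrary $T\in\mathcal{B}(X)$; for each rank-one $F$ one has $\phi(T)\phi(F)+\phi(F)\phi(T)=\gamma(F)\big(\phi(T)F+F\phi(T)\big)$, so Lemma~\ref{lem1}(ii) together with (i) gives $K(\phi(T)F+F\phi(T))=K(TF+FT)$ for all $F\in\mathcal{F}_1(X)$, and Lemma~\ref{gene}, implication (ii) $\Rightarrow$ (iii), yields $\phi(T)=\gamma(T)T$ with $\gamma(T)\in\C\setminus\{0\}$ (set $\gamma(0):=1$, consistent with $\phi(0)=0$). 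So everything reduces to rank-one operators. The computational engine here is an explicit description of $K(AB+BA)$ for rank-one $A=x_0\otimes f_0$ and $B=y\otimes h$: since $AB+BA=f_0(y)\,x_0\otimes h+h(x_0)\,y\otimes f_0$ has rank at most two, one checks that its analytic core is $\{0\}$ exactly when $f_0(y)h(x_0)=0$, is two-dimensional and equal to $\span\{x_0,y\}$ precisely when $x_0,y$ and $f_0,h$ are independent with $f_0(y)h(x_0)\neq0\neq f_0(y)h(x_0)-f_0(x_0)h(y)$, and is one-dimensional with a computable generator in all remaining cases. Writing $\phi(A)=u_0\otimes g_0$ and taking $B=A$ in (i) gives $K(\phi(A)^2)=K(A^2)=K(f_0(x_0)A)$, which shows that $A$ is nilpotent iff $\phi(A)$ is nilpotent and that in the non-nilpotent case $\span\{u_0\}=\span\{x_0\}$ with $g_0(u_0)\neq0$, so $\phi(A)=x_0\otimes g_0$ with $g_0(x_0)\neq0$; it then remains to force $g_0\in\span\{f_0\}$, which I would obtain by testing $A$ against rank-one idempotents of range $\span\{x_0\}$ and against nilpotents $y\otimes h$ with $y\in\ker f_0$, matching both the dimensions and the subspaces of the cores on the two sides (using that $K(R)\cap\ker R=\{0\}$ for finite-rank $R$, since then $K(R)=\bigcap_n R^n(X)$ and $R$ is bijective on it). The nilpotent rank-one case is treated in the same spirit, now recovering the range and the kernel of $\phi(A)$ separately from products $AB+BA$ with rank-one idempotents.

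The step I expect to be the main obstacle is precisely this last one: establishing, in the rank-one case, that $\phi(x_0\otimes f_0)$ is a scalar multiple of $x_0\otimes f_0$ and not merely a rank-one operator with partially constrained range or kernel. The difficulty is that, among rank-one operators sharing a fixed range, the preserver condition is essentially non-restrictive, so one is forced to run Jordan products of $\phi(A)$ against rank-one operators of every possible range and kernel and to exploit the fine structure of the analytic core of a rank $\leq2$ operator; organizing this into a finite, exhaustive case analysis — of the same flavour as, but more delicate than, the proofs of Lemma~\ref{lemme3} and Lemma~\ref{gene} — is where the real work lies.
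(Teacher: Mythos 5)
Your plan is essentially the paper's proof: $\phi(0)=0$ and $\phi^{-1}(0)=\{0\}$ via Lemma~\ref{gene}, two-sided rank-one preservation via Lemma~\ref{lemme3} and surjectivity, determination of $\phi$ on rank-one operators first, and the final passage to arbitrary $T$ by applying Lemma~\ref{gene} to the identity $K(TF+FT)=K(\phi(T)F+F\phi(T))$ for all rank-one $F$. The step you single out as the main obstacle is carried out in the paper exactly along the lines you sketch: splitting into non-nilpotent and nilpotent rank-one $A=x\otimes f$, pinning down the range of $\phi(A)$ first, and then forcing the functional to be a scalar multiple of $f$ by exhibiting a single auxiliary rank-one operator for which the two sides of the preserving identity would otherwise have cores $\lbrace 0\rbrace$ and $\neq\lbrace 0\rbrace$ respectively.
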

\begin{proof} Clearly, we have $(ii) \Longrightarrow (i) $. It remains to show that $ (i) \Longrightarrow  (ii) $. We divide the proof into several steps.\\\\
\textbf{Step 1.} For any $R \in \mathcal{B} (X)$, $ \phi (R) = 0 $ if and only if $ R = 0. $ \\ \\
Let us show that $ \phi (0) = 0$. We have \begin{eqnarray*}
K (\phi(T) \phi (0) + \phi (0)\phi(T)) &=& K (T0+0T) \\
&=& \lbrace 0 \rbrace\\
&=& K(\phi(T) 0 + 0 \phi (T)).
\end{eqnarray*}
As $ \phi $ is surjective, then Lemma \ref{gene} implies that $\phi (0) = 0$.\\
Now, assume that $ \phi (R)= 0$ for an operator $ R \in \mathcal{B} (X) $. Hence \begin{eqnarray*}
K (R T + TR) &=& K (\phi(R) \phi(T) + \phi (T) \phi (R)) \\
&=& \lbrace 0 \rbrace\\
&=& K( 0 T + T 0 ).
\end{eqnarray*}
 From Lemma \ref{gene}, we infer that $ R = 0$.\\ \\
\textbf{Step 2.} For any operator $ F \in \mathcal{B} (X)$, $\phi (F) \in \mathcal{F}_1 (X)$ if and only if $F \in \mathcal{F}_1 (X)$.\\ \\
Let $ F \in \mathcal{B} (X)$ such that $ \phi (F)$ is a rank one operator. Step 1 implies that $ F \neq 0$. By Lemma \ref{lemme3}, we obtain that $$ \dim ( K(S \phi (F) + \phi(F) S))  \leq 2$$ for all $ S \in \mathcal{B} (X)$. As $\phi$ is surjective, then $$ \dim ( K(TF+FT)) = \dim ( K(\phi (T) \phi (F) + \phi(F) \phi (T))) \leq 2 $$ for all $ T \in \mathcal{B} (X)$. It follows from Lemma \ref{lemme3} that $ F$ is a rank one operator. Just as before, we obtain in the same way that if $F \in \mathcal{F}_1 (X)$ then  $ \phi (F) \in \mathcal{F}_1 (X)$. \\\\ 
\textbf{Step 3.} There is a nonzero scalar $ \lambda_{F} \in \mathbb{C} $ such that $ \phi (F) = \lambda_{F} F $ for all $ F \in \mathcal{F}_1 (X)$. For that we discuss two cases, if $ F $ is nilpotent or not.
\begin{itemize}
\item[(1)] There is a nonzero scalar $\lambda_{P}$ such that $\phi (P) = \lambda_{P} P $, for every rank one idempotent operator $P \in \mathcal{F}_1 (X)$.\\
Let $ x \in X $ and $ f \in X^{*}$ such that $ f(x) = 1 $. Step 2 implies that there exist a nonzero vector $ y \in X $ and a linear functional $ g \in X^{*}$ such that $ \phi (x\otimes f )= y \otimes g $. Since $x \otimes f$ is a rank one idempotent operator, then
\begin{eqnarray*}
 span \lbrace x \rbrace &=& K (x \otimes f ) \\
 &=& K (x \otimes f x \otimes f + x \otimes f x \otimes f )\\
 &=& K (y \otimes g y \otimes g +y \otimes g x \otimes g )\\ 
 &=& K ( 2 g(y) y \otimes g ).
\end{eqnarray*} 
This yields that $ g(y) \neq 0 $ and $  span \lbrace y \rbrace =  K ( 2 g(y) y \otimes g ) = span \lbrace x \rbrace $. Therefore $ y = \alpha x $ for a nonzero scalar $\alpha \in \mathbb{C}$. Without loss of generality, we may and shall assume that $ \alpha = 1$, then we get that $ \phi (x \otimes f) = x \otimes g_{x,f}$ for certain linear functional $   g_{x,f} \in X^{*}$.
We claim that $ f$ and $g_{x,f}$ are linearly dependent. If not, take a nonzero vector $ z \in X$ such that $x$ and $z$ are linearly independent in X, with $f(z) =1$ and $g_{x,f}(z)=0$. Just as before, one shows that there exists a linear functional $ g_{z,f} \in X^{*}$ such that $ \phi (z \otimes f) = z \otimes g_{z,f}$. Observe that $$ (x \otimes f z \otimes f + z \otimes f x \otimes f  )( x + z ) = 2 ( x + z ).$$ Then $$ (x+z) \in  K(x \otimes f z \otimes f + z \otimes f x \otimes f ).$$ On the other hand, we have
\begin{eqnarray*}
\lbrace 0 \rbrace&=&K(g_{z,f}(x)z \otimes g_{x,f}) \\
&=& K(x \otimes g_{x,f} z \otimes g_{z,f} + z \otimes g_{z,f} x \otimes g_{x,f} )  \\
&=& K(\phi(x \otimes f)\phi( z \otimes f) + \phi(z \otimes f )\phi(x \otimes f ))\\
&=& K(x \otimes f z \otimes f + z \otimes f x \otimes f ), 
\end{eqnarray*}
which is a contradiction. Hence $ g_{x,f}$ and $f$ are linearly dependent, thus $\phi (x \otimes f)= \alpha x \otimes f  $ for some nonzero scalar $\alpha \in \mathbb{C}$.
\item[(2)] There is a nonzero scalar $\lambda_{N}$ such that $\phi (N) = \lambda_{N} N $, for every rank one nilpotent operator $N \in \mathcal{F}_1 (X)$.\\
Let $ x \in X $ and $ f \in X^{*}$ such that $ f(x) = 0 $. Set $ T = x \otimes f$, then $ \phi (T) = y \otimes g $ with $ g(y) = 0$. Assume that $x$ and $y$ are linearly independent, and so let $ z \in X$ and $ h \in X^{*}$ such that $ f(z) = h(x) = 1$, $ h(z) \neq 0$ and $ h(y) = 0$. It follows that 
\begin{eqnarray*}
\lbrace 0 \rbrace &=& K(g(z) y \otimes h) \\
&=& K(y \otimes g z \otimes h + z \otimes h y \otimes g)\\
&=&K(\phi(x \otimes f )\phi(z \otimes h) + \phi(z \otimes h )\phi(x \otimes f)) \\
&=& K(x \otimes f z \otimes h + z \otimes h x \otimes f).
\end{eqnarray*}
Observe that $$(x \otimes f z \otimes h + z \otimes h x \otimes f) x = x ,$$ hence $$span\lbrace x \rbrace \subset K(x \otimes f z \otimes h + z \otimes h x \otimes f).$$ This contradiction shows that $x$ and $y$ are linearly dependent. As before, without loss of generality, we may assume that $ \phi (x \otimes f) = x \otimes g_{x,f}$ for certain linear functional $   g_{x,f} \in X^{*}$.\\
Suppose that $f$ and $g$ are linearly independent, and let $z \in X$ and $ h \in X^{*}$ such that $f(z) = 1 $, $ g(z)=0$, $ h(x) = 1$ and $ h(z) \neq 0$. Since $$(x \otimes f z \otimes h + z \otimes h x \otimes f) x = x,$$ then $$span\lbrace x \rbrace \subset K(x \otimes f z \otimes h + z \otimes h x \otimes f).$$ On the other hand, we get that
\begin{eqnarray*}
\lbrace 0 \rbrace &=& K(z \otimes g) \\
&=& K(x \otimes g z \otimes h + z \otimes h x \otimes g)\\
&=&K(\phi(x \otimes f )\phi(z \otimes h) + \phi(z \otimes h )\phi(x \otimes f)) \\
&=& K(x \otimes f z \otimes h + z \otimes h x \otimes f).
\end{eqnarray*}
Then, this contradiction shows also that $f$ and $g$ are linearly dependent. Thus $ \phi (x \otimes f) = \beta x \otimes f$ for some nonzero scalar $\beta \in \mathbb{C}$. \\
Now, if $ f(x) \in \mathbb{C}\setminus \{0,1\}$, obviously this can be obtained as in (1) of Step 3. Finally, there is a nonzero scalar $\lambda_{F} \in \mathbb{C}$ such that $\phi (F)= \lambda_{F} F$ for all rank one operator $ F \in \mathcal{F}_1 (X)$. \\
\end{itemize}
 \textbf{Step 4.} There exists a map $ \gamma: \mathcal{B} (X) \longrightarrow \mathbb{C} \backslash \lbrace 0 \rbrace$ such that $ \phi (T) = \gamma (T) T$ for all $ T \in \mathcal{B} (X) $.\\ \\
For every $F \in \mathcal{F}_1 (X)$ and every $T \in \mathcal{B} (X)$, we have $$K (TF + FT) = K (\phi (T) \phi (F) + \phi (F) \phi (T)) .$$ By using step 3, we get that
\begin{eqnarray*}
K (\phi (T) \phi (F) + \phi (F) \phi (T)) &=& K (\lambda_F (\phi (T) F +  F \phi (T))) \\
 &=& K ( \phi (T) F +  F \phi (T)).
\end{eqnarray*}
Then $K (TF + FT) = K ( \phi (T) F +  F \phi (T))$, for all operator $ F \in \mathcal{F}_1 (X) $.
Lemma \ref{gene} implies that $ \phi (T)$ and $T$ are linearly dependent. Hence, there exists a map $ \gamma: \mathcal{B} (X) \longrightarrow \mathbb{C} \backslash \lbrace 0 \rbrace$ such that $ \phi (T) = \gamma (T) T$ for all $ T \in \mathcal{B} (X) $. The proof is then complete. 
  \end{proof} 
As a consequence of theorem \ref{thm1}, we get the following corollary.
  \begin{corollary}
  Let $\phi: \mathcal{B} (X) \longrightarrow \mathcal{B} (X)$ be a surjective map. Then the following assertions are equivalent.
\begin{enumerate}
\item[(i)] $i_{\phi (T) \phi (S) + \phi (S) \phi (T)}(x)=0 \Longleftrightarrow i_{TS + ST} (x) = 0 $ for every $ T, S \in \mathcal{B} (X)$ and $ x \in X$. 
\item[(ii)] There exists a map $ \gamma : \mathcal{B} (X) \longrightarrow \mathbb{C} \backslash \lbrace 0 \rbrace$ 
such that $ \phi (T) = \gamma (T) T $ for all $ T \in \mathcal{B} (X)$.
\end{enumerate}
 \end{corollary}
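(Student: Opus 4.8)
The plan is to deduce the corollary directly from Theorem~\ref{thm1}, using parts (vi) and (vii) of Lemma~\ref{lem1} as a dictionary between the inner local spectral radius and the analytic core. The implication (ii) $\Longrightarrow$ (i) should be essentially immediate: if $\phi(T) = \gamma(T)T$ with $\gamma(T) \neq 0$ for all $T$, then
$$ \phi(T)\phi(S) + \phi(S)\phi(T) = \gamma(T)\gamma(S)\,(TS + ST), $$
and since $i_{\mu R}(x) = 0$ if and only if $i_R(x) = 0$ for every nonzero scalar $\mu$ and every $x \in X$ (which follows, for instance, from $\sigma_{\mu R}(x) = \mu\,\sigma_R(x)$ together with Lemma~\ref{lem1}(vii), or directly from the defining formula for $i_R(x)$ via the glocal subspaces), the two vanishing conditions coincide for all $T, S$ and all $x$.

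For (i) $\Longrightarrow$ (ii), the first step is to rewrite the hypothesis purely in terms of analytic cores. By Lemma~\ref{lem1}(vii), for any $R \in \mathcal{B}(X)$ and any $x \in X$ one has $i_R(x) = 0$ if and only if $0 \in \sigma_R(x)$, and by Lemma~\ref{lem1}(vi) the latter fails exactly when $x \in K(R)$; hence $i_R(x) = 0$ if and only if $x \notin K(R)$. Applying this with $R = \phi(T)\phi(S) + \phi(S)\phi(T)$ and with $R = TS + ST$, assertion (i) of the corollary says precisely that for all $T, S \in \mathcal{B}(X)$ and all $x \in X$,
$$ x \notin K\bigl(\phi(T)\phi(S) + \phi(S)\phi(T)\bigr) \quad \Longleftrightarrow \quad x \notin K(TS + ST). $$
Since this biconditional holds for every fixed $x$, taking complements in $X$ gives the set equality $K(\phi(T)\phi(S) + \phi(S)\phi(T)) = K(TS + ST)$ for all $T, S \in \mathcal{B}(X)$, which is exactly assertion (i) of Theorem~\ref{thm1}. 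Theorem~\ref{thm1} then provides the map $\gamma: \mathcal{B}(X) \longrightarrow \mathbb{C}\setminus\{0\}$ with $\phi(T) = \gamma(T)T$, giving (ii).

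I do not anticipate a genuine obstacle here, since the argument is a short reduction; the only points requiring a little care are keeping the logical direction of the equivalence $i_R(x) = 0 \Leftrightarrow x \notin K(R)$ straight (so as not to confuse $K(R)$ with its complement), and noting that it is the pointwise biconditional in (i) — not a one-sided implication — that upgrades to genuine equality of the sets $K(\,\cdot\,)$, which is what Theorem~\ref{thm1} demands as input. Everything else is a direct invocation of Lemma~\ref{lem1} and Theorem~\ref{thm1}.
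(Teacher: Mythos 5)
Your proposal is correct and follows exactly the route of the paper's own (much terser) proof: translate $i_R(x)=0$ into $x\notin K(R)$ via Lemma~\ref{lem1}(vi)--(vii), observe that the pointwise biconditional in (i) is then equivalent to the set equality of analytic cores required in Theorem~\ref{thm1}(i), and invoke that theorem. The paper merely cites these ingredients without writing out the reduction, so your version is the same argument with the details filled in.
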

\begin{proof}
The implication $(ii) \Longrightarrow (i) $ is straightforward.\\
$(i) \Longrightarrow (ii)$ 	From theorem \ref{thm1}, and the assertions $(vi)$ and $(vii)$ in lemma \ref{lem1}, we get the desired form of $\phi$.
\end{proof} 




\begin{thebibliography}{99} 


\bibitem{ref1} {\sc Z. Abdelali, A. Achchi and R. Marzouki}, {\it Maps preserving the local spectrum of skew-product of operators}, Linear Algebra and its Applications {\bf 485} (2015), 58-71.

\bibitem{ref2} {\sc A. Achchi}, {\it Maps preserving the inner local spectral radius zero of generalized product of operators}, Rendiconti del Circolo Matematico di Palermo Series 2  {\bf 68} (2018), 355-362.


\bibitem{ref3} {\sc P. Aiena}, {\it Abstract Fredholm theory}, Fredholm and Local Spectral Theory with Applications to Multipliers {\bf } (2007 ), 239-308.

\bibitem{ref5} {\sc A. Bourhim}, {\it Surjective linear maps preserving local spectra}, Linear Algebra and its Applications  {\bf 432} (2010), 383-393.

\bibitem{ref6} {\sc A. Bourhim and M. Mabrouk}, {\it Maps preserving the local spectrum of Jordan product of matrices}, Linear Algebra and its Applications {\bf 484} (2015), 379-395.

\bibitem{ref7} {\sc A. Bourhim and M. Mabrouk}, {\it Maps preserving the local spectrum of Jordan product}, Studia mathematica  {\bf 234} (2016), 97-120.


\bibitem{ref8} {\sc A. Bourhim and J. Mashreghi}, {\it Local Spectral Radius Preservers}, Integral Equations and Operator Theory  {\bf 76} (2013), 95-104.

\bibitem{ref11} {\sc A. Bourhim and J. Mashreghi}, {\it Maps preserving the local spectrum of triple product of operators}, Linear and Multilinear Algebra {\bf 63} (2014), 765-773.

\bibitem{ref10} {\sc A. Bourhim and J. Mashreghi}, {\it A survey on preservers of spectra and local spectra}, Invariant Subspaces of the Shift Operator Contemporary Mathematics  {\bf } (2015), 45-98.

\bibitem{ref9} {\sc A. Bourhim and J. Mashreghi}, {\it Maps Preserving The Local Spectrum Of Product Of Operators}, Glasgow Mathematical Journal  {\bf 57} (2014), 709-718.

\bibitem{ref12} {\sc A. Bourhim and V. G. Miller}, {\it Linear maps on Mn(C) preserving the local spectral radius}, Studia Mathematica  {\bf 188} (2008), 67-75.

\bibitem{ref13} {\sc A. Bourhim and T. Ransford}, {\it Additive Maps Preserving Local Spectrum}, Integral Equations and Operator Theory  {\bf 55} (2005), 377-385.

\bibitem{ref14} {\sc  C. Costara}, {\it Continuous maps preserving local spectra of matrices}, Linear Algebra and its Applications  {\bf 492} (2016), 1-8.

\bibitem{ref16} {\sc  C. Costara}, {\it Local spectrum linear preservers at non-fixed vectors}, Linear Algebra and its Applications  {\bf 457} (2014), 154-161.



\bibitem{ref19} {\sc M. Elhodaibi and A. Jaatit}, {\it Inner local spectral radius preservers}, Rendiconti del Circolo Matematico di Palermo Series 2  {\bf 67} (2017), 215-225.

\bibitem{ref20} {\sc M. E.-C. E. Kettani and H. Benbouziane}, {\it Additive maps preserving operators of inner local spectral radius zero}, Rendiconti del Circolo Matematico di Palermo  {\bf 63} (2014), 311-316.

\bibitem{ref22} {\sc T. Jari}, {\it Nonlinear maps preserving the inner local spectral radius}, Rendiconti del Circolo Matematico di Palermo  {\bf 64} (2015), 67-76.

\bibitem{ref17} {\sc  C.-K. Li, P. Šemrl, and N.-S. Sze}, {\it Maps preserving the nilpotency of products of operators},   Linear Algebra and its Applications {\bf 424} (2007), 222-239.

\bibitem{ref21} {\sc K. B. Laursen and M. Neumann}, {\it An Introduction to Local Spectral Theory},  Oxford University Press {\bf } (2000).


\bibitem{ref23} {\sc T. Miller, V. Miller and M. Neumann}, {\it Local spectral properties of weighted shifts}, Journal of Operator Theory  {\bf 51} (2004), 71-88.
\end{thebibliography}
\end{document}